\documentclass[11pt]{amsart}
\usepackage{amsmath,amssymb,amsthm, mathtools}
\usepackage[usenames,dvipsnames]{color}
\usepackage{hyperref}  
\usepackage{tikz-cd}

\newcommand\C{\mathbb{C}}
\newcommand\FPP{\textrm{FPP}}
\newenvironment{Aut}{{\rm Aut}}{}

\newtheorem{theorem}{Theorem}
\newtheorem{lemma}[theorem]{Lemma}
\newtheorem{proposition}[theorem]{Proposition} 
\theoremstyle{definition}
\newtheorem{question}{Question}

\begin{document}
\title[]{Corrigendum and addendum to ``Fixed-point-free elements of iterated monodromy groups" [Trans. of the American Mathematical Society Vol. 367, No. 3 (2015), 2023-2049]}
\author{Jorge Fariña-Asategui}
\author{Rafe Jones}
\author{Santiago Radi}

\begin{abstract}
In this note we document a gap in an argument in the above paper, and point to new work in the literature giving a complete proof of the main result.
\end{abstract}

\maketitle

We adopt the notation and definitions of \cite{fpfree}.  The argument proving Theorem 1.5 on \cite[p. 2033]{fpfree} contains a gap. The hypotheses of Theorem 1.5 are that $G \leq \Aut(X^*)$ is contracting, contains a spherically transitive element, and satisfies the property that every element of $\mathcal{N}_1$ fixes either no ends of $X^*$ or infinitely many. The argument given is sufficient to deduce that 
\begin{equation} \label{newcond}
\text{every element of $G$ fixes either no ends of $X^*$ or infinitely many.} 
\end{equation}
Denote by $G_\infty$ the closure of $G$ in $\Aut(X^*)$. Martingale methods imply that $\mu(\{g~\in~G_\infty  : \text{$g$ fixes infinitely many ends of $X^*$})\} = 0$, where $\mu$ is the Haar probability measure on the compact group $G_\infty$. The proof of Theorem 1.5 attempts to extend the dichotomy in \eqref{newcond} to elements of $G_\infty$, but this does not work because $G_\infty$ is not in general contracting even when $G$ is.  If \eqref{newcond} applied to $G_\infty$, then the martingale methods would give $\mu(\{g \in G_\infty  : \text{$g$ fixes at least one end of $X^*$})\} = 0$, which is the stated conclusion of Theorem 1.5.  In \cite{fpfree} this last quantity is denoted $\mathcal{F}(G)$, but in the subsequent literature the standard notation has become $\FPP(G)$, which we adopt here.

Theorem 1.1 of \cite{fpfree} states that if a post-critically finite $f \in \C[z]$ has degree at least 2 and iterated monodromy group $G$, and if $f$ is not exceptional, then $\FPP(G) = 0$. The proof relies on Theorem 1.5, and therefore the methods of \cite{fpfree} are sufficient only to prove the theorem with the statement that $\FPP(G) = 0$ replaced by \eqref{newcond}.  In Corollary 1.6 of \cite{fpfree}, which replaces the hypothesis on $\mathcal{N}_1$ in Theorem 1.5 with the assumption that $G$ is torsion-free, the conclusion should also be changed from $\FPP(G) = 0$ to the statement \eqref{newcond}.

A complete proof of Theorem 1.1 of \cite{fpfree} has been recently been given by the first and third authors \cite{fpp2}, building on methods introduced in \cite{fpp1}. The arguments of \cite{fpp1} and \cite{fpp2} take a different approach from those of \cite{fpfree}: rather than relying on the contracting property of iterated monodromy groups, they show that such groups are in general mixing groups (see \cite[Definition 2.2]{fpp2}), a strong group-theoretic property. The mixing condition allows one to apply martingale methods to obtain $\FPP(G) = 0$. The main result of \cite{fpp2} in fact gives a proof of a stronger result than Theorem 1.1 of \cite{fpfree}, as the hypothesis that $f$ not be exceptional may be weakened to $f$ not being a Chebyshev polynomial. The results of \cite{fpp2} also hold over ground fields more general than $\C$, and for certain non-polynomial rational functions. 

It remains an open question to determine whether the hypotheses of Theorem 1.5 (or Corollary 1.6) of \cite{fpfree} are sufficient to imply $\FPP(G) = 0$. 
\begin{question} \label{quest}
Suppose that $G \leq \Aut(X^*)$ is a finitely-generated, self-similar, contracting subgroup such that each element of $g$ fixes no ends or infinitely many ends. Must $\FPP(G) = 0$? 
\end{question}
One approach would be to show that $\{g \in G_\infty  : \text{$g$ fixes at least one end of $X^*$}\}$ may be decomposed as the union of a set of measure zero and a set with non-empty interior. Then, under the hypotheses of Theorem 1.5 of \cite{fpfree}, \eqref{newcond} would show that $\FPP(G) = 0$. In many, and perhaps all, cases where they are known to occur, positive-measure subsets of $G_\infty$ fixing at least one end of $X^*$ arise as cosets of finite-index subgroups, and hence have non-empty interior. These include the iterated monodromy groups of Chebyshev polynomials \cite[Proposition 1.2]{fpfree}, profinite arithmetic iterated monodromy groups of certain Latt\`es maps \cite[Appendix A]{BJKL} and unicritical polynomials of odd degree \cite[Corollary 7]{fppe}, \cite[Theorem 1]{fpppos}, as well as some abstract groups \cite{fpppos2}, \cite[Theorem 9]{fppe}, \cite[Theorem 2]{fpppos}.

The basic idea in \cite{fpfree} of using contracting properties of the group $G$ can in fact be used to deduce that $\FPP(G) = 0$ in at least some cases, as we now show. Following Proposition \ref{lemma: FPP 0 three conditions} below we discuss the applicability of these methods to iterated monodromy groups of post-critically finite complex polynomials, and in particular note that they do not apply to every such group. 

In what follows, we adopt some of the notation of \cite{fpp2}. Denote by $X^n$ the truncated tree of height $n$, $\mathcal{L}_n \subset X^n$ the set of vertices of height $n$, and $\pi_n$ the natural projection from $G \leq \Aut(X^*)$ to $\Aut(X^n)$. For $a \in \pi_n(G)$, define the cone set $C_a = \pi_n^{-1}(a)$. The tree $X^*$ itself gives rise to an action $\mathcal{T}$ on $G$ by taking sections: $\mathcal{T}_v(g) = g|_v$ for $v \in X^*$. See \cite[Section 3]{fpp2} for details.   

\begin{lemma}[{Subindependence lemma}]
\label{lemma: subindependence lemma}
Let $G\le \mathrm{Aut}(X^*)$ be a closed self-similar subgroup with Haar probability measure $\mu$. Then, for every $n,m \geq 1$, $a \in \pi_n(G)$, $b \in \pi_m(G)$, and $v \in \mathcal{L}_n$ satisfying $C_a \cap \mathcal{T}_v^{-1}(C_b) \neq \emptyset,$ we have $$\mu(C_a \cap \mathcal{T}_v^{-1}(C_b)) \geq \mu(C_a) \cdot \mu(C_b).$$
\end{lemma}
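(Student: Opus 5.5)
The plan is to show that the intersection $C_a \cap \mathcal{T}_v^{-1}(C_b)$ is a left coset of an open subgroup of $G$, and then compute that subgroup's index. Throughout, $\mathrm{St}_G(n)$ denotes the kernel of $\pi_n$ on $G$.

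First, $C_a$ is a left coset of $\mathrm{St}_G(n)$, so $\mu(C_a) = 1/|\pi_n(G)|$; likewise $\mu(C_b) = 1/|\pi_m(G)|$. These equalities use that $\mu$ is a Haar probability measure and that $\mathrm{St}_G(n)$ is open of index $|\pi_n(G)|$. By hypothesis the intersection is nonempty, so fix $g_0 \in C_a \cap \mathcal{T}_v^{-1}(C_b)$, i.e.\ $\pi_n(g_0)=a$ and $\pi_m(g_0|_v)=b$. Write an arbitrary element of $C_a$ as $g_0 h$ with $h \in \mathrm{St}_G(n)$. Since $h$ fixes $v \in \mathcal{L}_n$, the section rule gives
$$(g_0h)|_v = g_0|_{h(v)}\, h|_v = g_0|_v\, h|_v .$$
Because $G$ is self-similar, $h|_v \in G$, so $\pi_m((g_0h)|_v) = b\,\pi_m(h|_v)$. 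Hence $g_0h \in \mathcal{T}_v^{-1}(C_b)$ if and only if $\pi_m(h|_v)=1$. This yields the exact description
$$C_a \cap \mathcal{T}_v^{-1}(C_b) = g_0 K, \qquad K = \{h \in \mathrm{St}_G(n) : \pi_m(h|_v) = 1\}.$$

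Next, consider the map $\varphi\colon \mathrm{St}_G(n) \to \pi_m(G)$, $h \mapsto \pi_m(h|_v)$. It is a group homomorphism, because for $h,h' \in \mathrm{St}_G(n)$ we have $(hh')|_v = h|_{h'(v)}h'|_v = h|_v h'|_v$, again using that $h'$ fixes $v$. Its kernel is $K$, which is therefore a closed (indeed open, since it contains $\mathrm{St}_G(n+m)$) subgroup of $\mathrm{St}_G(n)$. The index satisfies $[\mathrm{St}_G(n):K] = |\varphi(\mathrm{St}_G(n))| \le |\pi_m(G)|$.

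By left invariance of $\mu$,
$$\mu(C_a \cap \mathcal{T}_v^{-1}(C_b)) = \mu(K) = \frac{\mu(\mathrm{St}_G(n))}{[\mathrm{St}_G(n):K]} \ge \frac{1}{|\pi_n(G)|}\cdot\frac{1}{|\pi_m(G)|} = \mu(C_a)\,\mu(C_b).$$

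The only delicate step is the homomorphism/coset identification. It relies on two facts: elements of $\mathrm{St}_G(n)$ fix $v$, so the cocycle rule for sections degenerates to multiplicativity; and self-similarity places $h|_v$ in $G$, so that $\pi_m(h|_v)$ lies in $\pi_m(G)$. This is also exactly where the nonemptiness hypothesis enters, since it supplies the base point $g_0$. Everything else is bookkeeping with Haar measure of open cosets.
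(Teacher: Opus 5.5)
Your proof is correct and follows essentially the same route as the paper. Both arguments identify $C_a \cap \mathcal{T}_v^{-1}(C_b)$ as a translate of the kernel of the section-at-$v$ homomorphism on $\mathrm{St}_G(n)$ and bound that kernel's index by $|\pi_m(G)|$; the paper works at the finite level $\pi_{n+m}(\mathrm{St}_G(n))$ and cites translation invariance without writing the coset, whereas you work in $\mathrm{St}_G(n)$ directly and make the translate $g_0K$ explicit.
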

\begin{proof}
The proof follows a similar idea as \cite[Proposition 3.2]{fpp2}. Denote $\mathrm{id}_n = \pi_n(\mathrm{id})$ and $\mathrm{id}_m = \pi_m(\mathrm{id})$. As $C_a \cap \mathcal{T}_v^{-1}(C_b) \neq \emptyset$ and $\mu$ is translation invariant, we get 
$$\mu(C_a \cap \mathcal{T}_v^{-1}(C_b)) = \mu(C_{\mathrm{id}_n} \cap \mathcal{T}_v^{-1}(C_{\mathrm{id}_m})).$$
Recall that $\mathrm{St}_G(n)$ is the kernel of $\pi_n$. Let $\varphi: \pi_{n+m}(\mathrm{St}_G(n)) \rightarrow \pi_m(G)$ be the group homomorphism induced by $\varphi_v^m \mid_{\mathrm{St}_G(n)}$, that is, by taking $\pi_m$ of the section at $v$. Then 
\begin{align*}
\mu(C_a \cap \mathcal{T}_v^{-1}(C_b)) &=\mu(C_{\mathrm{id}_n} \cap \mathcal{T}_v^{-1}(C_{\mathrm{id}_m}))= \frac{|\ker(\varphi)|}{|G: \mathrm{St}_G(n+m)|} \\
&= \frac{1}{|\mathrm{Im}(\varphi)|}\cdot  \frac{|\pi_{n+m}(\mathrm{St}_G(n))|}{|G: \mathrm{St}_G(n+m)|} \geq \frac{1}{|\pi_m(G)|} \cdot\frac{|\pi_{n+m}(\mathrm{St}_G(n))|}{|G: \mathrm{St}_G(n+m)|} \\
&= \frac{1}{|\pi_m(G)|}\cdot  \frac{1}{|\pi_n(G)|}= \mu(C_a) \cdot \mu(C_b). \qedhere
\end{align*}
\end{proof}

For $G \leq \Aut(X^*)$ and $v \in X^*$, denote by $G_v$ the set of all sections at $v$ of elements of $G$ fixing $v$. As in \cite[Section 2.3]{fpp2}, we say that $G \leq \Aut(X^*)$ is virtually super strongly fractal if $G_v = G$ for all $v \in X^*$ and the group
$$
K_G := \bigcap_{v \in \gamma} \textrm{St}_G(|v|)_v
$$
has finite index in $G$. Here $\gamma$ is an infinite rooted path in $X^*$ and $|v|$ is the distance of $v$ from the root of $X^*$. The group $K_G$ is independent of the choice of $\gamma$.
\begin{lemma}
Let $G \leq \mathrm{Aut}(X^*)$ be a closed virtually super strongly fractal group. Assume further that there exists $S$, a set of representatives of $G/K_G$, such that every $s\in S$ fixes infinitely many ends of $X^*$. Then $\mathrm{FPP}(G) = 0$.
\label{lemma: FPP 0 best result}
\end{lemma}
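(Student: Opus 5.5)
The plan is to combine the martingale method behind $\FPP(G)$ with the subindependence lemma (Lemma~\ref{lemma: subindependence lemma}). The idea is to show that an element fixing a vertex has, at every level, a uniformly positive conditional probability of acquiring additional fixed vertices below it, and that this is incompatible with the a.s. convergence of the fixed-point count.

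\emph{Martingale setup.} For $g\in G$ let $X_n(g)$ be the number of vertices in $\mathcal{L}_n$ fixed by $g$. I will use the martingale facts from \cite{fpfree,fpp2}: $(X_n)$ is a nonnegative integer-valued martingale with respect to the filtration $\mathcal{F}_n=\sigma(\pi_n)$. This uses that $G$ acts transitively on each level; I expect this to follow from $G_v=G$ for all $v$ together with the standing assumptions, and otherwise I would add it as a hypothesis. Hence $X_n$ converges $\mu$-a.s., so a.s. it is eventually constant. On the set of $g$ fixing at least one end we have $X_n(g)\ge 1$ for all $n$. So it suffices to show that a.s. the event $\{X_n\ge 1$ and $X_{n+m}\ge X_n+1\}$ cannot fail for all large $n$ while $X_n\geq 1$. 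The eventual-constant limit then forces $X_n\to 0$ a.s., i.e.\ $\FPP(G)=0$.

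\emph{Uniform lower bound.} Fix $n$, a cone $C_a$ with $a\in\pi_n(G)$, and a vertex $v\in\mathcal{L}_n$ fixed by $a$. For $g\in C_a$ the sections $g|_v$ range over a coset $h\,\mathrm{St}_G(n)_v$, since any two such $g$ differ by an element of $\mathrm{St}_G(n)$. By definition $\mathrm{St}_G(n)_v\supseteq K_G$ for $v$ on the path $\gamma$. Independence of $\gamma$, together with $G_v=G$, should give the same containment for all $v\in\mathcal{L}_n$; I will check this via conjugation by elements moving $\gamma$. Thus the coset $h\,\mathrm{St}_G(n)_v$ is a union of cosets of $K_G$, and so it contains $sK_G$ for some $s\in S$.

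Since $S$ is finite and each $s\in S$ fixes infinitely many ends, choose $m$ such that $\pi_m(s)$ fixes at least two vertices of $\mathcal{L}_m$ for every $s\in S$. Put $b=\pi_m(s)$. Then $C_a\cap\mathcal{T}_v^{-1}(C_b)\neq\emptyset$, and Lemma~\ref{lemma: subindependence lemma} gives
$$\mu\bigl(C_a\cap\mathcal{T}_v^{-1}(C_b)\bigr)\ge \mu(C_a)\,\mu(C_b)\ge \frac{\mu(C_a)}{|\pi_m(G)|}.$$
On this set, $g$ fixes $v$ and at least two descendants of $v$ at level $n+m$. Since every fixed vertex at level $n+m$ lies under a fixed vertex at level $n$, this gives $X_{n+m}(g)\ge X_n(g)+1$. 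Hence, with $c=1/|\pi_m(G)|$ independent of $n$,
$$\mu\bigl(X_{n+m}\ge X_n+1\mid\mathcal{F}_n\bigr)\ge c \quad\text{on } \{X_n\ge 1\}.$$

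\emph{Conclusion.} By Lévy's conditional Borel--Cantelli lemma applied along the levels $n, n+m, n+2m,\dots$, almost every $g$ with $X_n(g)\ge1$ for all $n$ has $X_{n+km}>X_{n+(k-1)m}$ for infinitely many $k$. This contradicts eventual constancy of $X_n$, so the set of elements fixing an end has measure $0$.

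The main obstacle I expect is the bookkeeping in the second step. One must verify that $\mathrm{St}_G(n)_v\supseteq K_G$ for every $v\in\mathcal{L}_n$, not just for $v$ on $\gamma$, and that the coset of sections really contains a full coset $sK_G$ with $s\in S$. It is also necessary to confirm that the martingale convergence of $X_n$ is available under the present hypotheses.
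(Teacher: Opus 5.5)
Your argument breaks at the inequality $X_{n+m}(g)\ge X_n(g)+1$. On $C_a\cap\mathcal{T}_v^{-1}(C_b)$ you control only the section $g|_v$. So all you know is that $g$ has at least two fixed vertices of $\mathcal{L}_{n+m}$ below $v$, i.e. $X_{n+m}(g)\ge 2$.

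To reach $X_n(g)+1$ you would also need each of the other $X_n(g)-1$ fixed vertices $w\neq v$ of $\mathcal{L}_n$ to keep at least one fixed descendant at level $n+m$. Nothing forces this: the sections $g|_w$ are unconstrained by your event and may act without fixed points on $\mathcal{L}_m$. The fact that fixed vertices of level $n+m$ lie below fixed vertices of level $n$ bounds $X_{n+m}$ from above, not from below.

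So when $X_n=r\ge 2$, your event can leave the count unchanged or even lower it. Then the conditional Borel--Cantelli step no longer rules out elements whose fixed-point count stabilizes at some $r\ge 2$. With a single $m$ independent of $r$, your argument only handles $r=1$.

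The fix, which is what the paper does, is to let $m$ depend on $r$. Since $S$ is finite and each $s\in S$ fixes infinitely many ends, for each $r\ge 1$ there is $m=m(r)$ such that $\pi_m(s)$ fixes more than $r$ vertices of $\mathcal{L}_m$ for every $s\in S$. Then on $C_a\cap\mathcal{T}_v^{-1}(C_{\pi_m(s)})$ the vertices below $v$ alone give $X_{n+m}>r$, whatever happens under the other fixed vertices. The subindependence lemma then yields $\mu(X_{n+m}>r\mid X_n=r)\ge |\pi_{m(r)}(G)|^{-1}$ for all $n$.

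Your Borel--Cantelli argument now runs separately on each event $\{X_k=r \text{ for all } k\ge N\}$ with step $m(r)$. A countable union over $r\ge 1$ and $N$ finishes the proof.

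The rest of your outline agrees with the paper. In particular, the containment $K_G\subseteq \mathrm{St}_G(n)_v$ for every $v\in\mathcal{L}_n$ needs no conjugation argument: every $v$ lies on some ray $\gamma$, and $K_G$ does not depend on $\gamma$. As for convergence of the fixed-point counts, the paper does not re-derive it either; it simply invokes the martingale strategy from the earlier work of the first and third authors.
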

\begin{proof}
Let $Y_n : G \to \mathbb{Z}_{\geq 0}$ be the number of fixed points of of $\pi_n(g)$ in $\mathcal{L}_n$. Following the idea of \cite[Theorem 4.5]{fpp2}, by the martingale strategy it is enough to prove that given $r > 0$, there exist $\epsilon > 0$ and $m \geq 1$ depending only on $r$, such that $$\mu(Y_{n+m} > r\mid Y_n = r) \geq \epsilon$$ for all $n \geq 1$. 

Let $m \geq 1$ be such that $Y_m(s) > r$ for every $s \in S$, which exists as $S$ is a finite set. Let $A_{n,r} := \{a \in \pi_n(G): Y_n(a) = r\}$. Then 
\begin{align*}
\mu(Y_{n+m)} > r\mid  Y_n = r ) &= \frac{\sum_{a \in A_{n,r}} \mu(C_a \cap Y_{n+m} > r)}{\mu(C_{A_{n,r}})}.
\end{align*}

Let $v_a \in \mathcal{L}_n$ be a vertex fixed by $a$. Observe that there exists $g_a \in C_a$ such that $g_a|_{v_a} = s$ for some $s \in S$. Indeed, take any $g_1 \in C_a$. Then, $g_1|_{v_a}$ is in some coset $G/K_G$. Multiplying $g_1$ by a suitable element $g_2 \in \mathrm{St}_G(n)$ with $g_2|_{v_a} \in K_G$, we obtain $g_a$. Therefore, 
\begin{align*}
\mu(Y_{n+m)} > r\mid  Y_n = r ) &\geq \frac{\sum_{a \in A_{n,r}} \mu(C_a \cap \mathcal{T}_{v_a}^{-1}(C_s))}{\mu(C_{A_{n,r}})} \\
&\geq \frac{\sum_{a \in A_{n,r}} \mu(C_a)\cdot \mu(C_s)}{\mu(C_{A_{n,r}})} \\
&= \mu(C_s)= |\pi_m(G)|^{-1},
\end{align*}
where in the second step we use Lemma \ref{lemma: subindependence lemma}. The proof follows by taking $\epsilon(r) := |\pi_m(G)|^{-1}$.
\end{proof}

For the next proposition, $G$ will be a contracting group generated by the states of a kneading automaton. See \cite[Sections 2.11 and 6.7]{ssgps} for definitions. This class of groups includes the iterated monodromy groups of all post-critically finite complex polynomials \cite[Theorem 6.8.3]{ssgps}, and in addition such groups are contracting. Let $\mathcal{N}(G)$ denote the nucleus of $G$. 

\begin{proposition}
Let $G=\langle g_1,\dotsc,g_n\rangle \leq \Aut(X^*)$ be a contracting, virtually super strongly fractal group generated by the states $g_1, \ldots, g_n$ of a kneading automaton. Assume further that:
\begin{enumerate}
\item there exists $x_0 \in X$ moved only by one generator $g_i$;
\item for all $j \neq i$, we have $g_j \mid_{x_0} \neq g_i$; 
\item every element in $\mathcal{N}(G) \cap \langle g_j:j\ne i\rangle$ fixes infinitely many ends of $X^*$.
\end{enumerate}
Then, $\mathrm{FPP}(G) = 0$.
\label{lemma: FPP 0 three conditions}
\end{proposition}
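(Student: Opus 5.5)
The plan is to apply Lemma \ref{lemma: FPP 0 best result} to the closure $G_\infty$ of $G$, which has the same $\FPP$. Write $H := \langle g_j : j \neq i\rangle$. The argument has three steps.

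\textbf{Step 1: transfer to the closure.} First I check that $G_\infty$ is closed and virtually super strongly fractal. Taking sections is continuous, so $(G_\infty)_v \supseteq \overline{G_v} = G_\infty$. Also $K_{G_\infty} \supseteq \overline{K_G}$, and the latter has finite index since $K_G$ does. It then suffices to produce representatives of $G_\infty/K_{G_\infty}$ that fix infinitely many ends. Each coset of $K_{G_\infty}$ is a union of cosets of the open subgroup $\overline{K_G}$, and each of these meets $G$. So it suffices to show that every coset $gK_G$ with $g \in G$ contains an element fixing infinitely many ends.

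\textbf{Step 2: every element of $H$ fixes infinitely many ends.} Let $h \in H$ be written as a word in the $g_j$, $j \neq i$. By (1), every such $g_j$ fixes $x_0$, so $h$ fixes $x_0$. Since $G$ is generated by a kneading automaton, each section of a generator is either a generator or trivial. By (2), no $g_j|_{x_0}$ equals $g_i$, so $h|_{x_0}$ is a word in the $g_j$ with $j\neq i$ and therefore lies in $H$. Iterating along the path $x_0x_0x_0\cdots$, $h$ fixes $x_0^k$ and $h|_{x_0^k} \in H$ for every $k$. Because $G$ is contracting, for large $k$ the section $h|_{x_0^k}$ lies in $\mathcal{N}(G)\cap H$. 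By (3) this section fixes infinitely many ends, and prefixing them with $x_0^k$ gives infinitely many ends fixed by $h$.

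\textbf{Step 3: $G = HK_G$.} It remains to show that every coset of $K_G$ meets $H$. Take $\gamma = x_0x_0\cdots$ in the definition of $K_G$, which is allowed since $K_G$ does not depend on $\gamma$. As $g_i$ and $H$ generate $G$, the key point is to absorb $g_i$, i.e.\ to show $g_i \in HK_G$, and more generally that $HK_G$ is closed under multiplication. This is the step I expect to be the main obstacle. My first attempt would use the kneading structure along the cycle of $g_i$. If $d$ is the length of the cycle of $g_i$ through $x_0$, then $g_i^d$ fixes the first level, and its section at $x_0$ is a conjugate of at most one generator. Combining this with super strong fractality, $\mathrm{St}_G(1)_{x_0}=G$, I would try to show that modulo elements whose sections along $\gamma$ lie in $\mathrm{St}_G(|v|)_v$, the letter $g_i$ can be traded for words in $H$. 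If a full equality $G=HK_G$ proves too strong, the fallback is to adapt Step 2 directly. For each coset representative $g$, one multiplies by level-stabilizer elements, as in the proof of Lemma \ref{lemma: FPP 0 best result}, to arrange that the section at some fixed vertex lies in $H$. Step 2 then applies to that section, which again gives infinitely many fixed ends. Once every coset has such a representative, Lemma \ref{lemma: FPP 0 best result} gives $\FPP(G)=0$.
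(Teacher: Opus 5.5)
Your Steps 1 and 2 are fine. Step 2 is exactly the first paragraph of the paper's proof, and Step 1 is a more careful version of the paper's remark that representatives of $\overline{G}/\overline{K}_G$ can be chosen from $G/K_G$.

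The genuine gap is Step 3. You correctly identify it as the crux, but nothing in the proposal proves that every coset of $K_G$ meets $H$. Two ingredients are missing.

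\begin{enumerate}
\item \textbf{Normality of $K_G$.} Let $|v|=n$, let $g\in G$ fix $v$, and let $k\in\mathrm{St}_G(n)$. Then $gkg^{-1}\in\mathrm{St}_G(n)$ and $(gkg^{-1})|_v=g|_v\,k|_v\,(g|_v)^{-1}$. So $\mathrm{St}_G(n)_v$ is normalized by $G_v$, which equals $G$. Hence $K_G$ is an intersection of normal subgroups of $G$ and is itself normal. It follows that $HK_G$ is a subgroup and that $G/K_G$ is generated by the images of $g_1,\dots,g_n$.
\item \textbf{The relation from the kneading structure.} For a suitable ordering of the generators, $g_1\cdots g_n\in K_G$. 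This is where the kneading hypothesis is really used; your Step 2 only needs that the states are closed under sections.
\end{enumerate}

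In $G/K_G$, the relation in (2) writes the image of $g_i$ as a product of inverses of the images of the other generators. Combined with (1), this gives $G/K_G=\langle g_jK_G : j\neq i\rangle$, that is, $G=HK_G$. Step 2 then shows that every coset of $K_G$ has a representative fixing infinitely many ends, and Lemma \ref{lemma: FPP 0 best result} applies. Your suggested route through $g_i^d$ and its section at $x_0$ does not produce this relation. Note also that $g_i^d$ need not stabilize the whole first level.

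Your fallback also fails. In the proof of Lemma \ref{lemma: FPP 0 best result}, multiplying by elements of $\mathrm{St}_G(n)$ is harmless because one only needs to stay inside the cone $C_a$. Here you must stay inside the coset $gK_G$, and in general $\mathrm{St}_G(n)\not\subseteq K_G$. Moreover, for $k\in\mathrm{St}_G(n)$ the elements $gk$ and $g$ act identically on $\mathcal{L}_n$. So if $g$ fixes no vertex of $\mathcal{L}_n$, no such modification creates one. Showing that each coset of $K_G$ contains an element with fixed points is exactly the missing content, and it requires the relation in (2).
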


\begin{proof}
By hypothesis, any $g\in \langle g_j:j\ne i\rangle$ fixes $x_0$. By condition (ii) we have 
$$g|_{x_0}\in \langle g_j: j\ne i\rangle.$$
In particular $g|_{x_0}$ fixes $x_0$ too by condition (i). As $G$ is contracting, there exists $n\ge 1$ such that $g|_{x_0^n}$ is in $\mathcal{N}(G) \cap \langle g_j:j\ne i\rangle$. Then, this section fixes infinitely many ends by condition (iii), i.e. $g$ fixes infinitely many ends.

Because $\{g_1, \ldots, g_n\}$ is the set of states of a kneading automaton, we have $g_1 \cdots g_n \in K_G$ for some ordering of the generators. Therefore, a set of representatives of $G/K_G$ is contained in $\langle g_j\mid j\ne i\rangle$ and they all fix infinitely many ends by (iii). Now, as we can take a set of representatives for $\overline{G}/{\overline{K}_G}$ from $G/K_G$, the assumptions in Lemma \ref{lemma: FPP 0 best result} are satisfied, so we get $\mathrm{FPP}(G) = 0$.
\end{proof}
We observe that there exist examples of post-critically finite complex polynomials whose iterated monodromy groups are generated by the states of a kneading automaton that fails to satisfy conditions (1) and (2) of Proposition 4. Such examples do not appear to be typical; the smallest one we found was a polynomial of degree $8$. 

Another obstacle to applying Proposition \ref{lemma: FPP 0 three conditions} to iterated monodromy groups is the issue of whether such groups are virtually super strongly fractal. The examples we have found support a positive answer, but we have been unable to find a proof. We state the following question for post-critically finite rational functions,
where we view $\text{IMG}(f)$ as a subgroup of $\Aut(X^*)$ via a standard action \cite[Section 5.2]{ssgps}. 

\begin{question}
Let $f \in \C(x)$ be post-critically finite of degree $d \geq 2$. Must $\textrm{IMG}(f)$ be virtually super strongly fractal?
\end{question}

\end{document}